\documentclass[letterpaper, 10pt]{article}

\usepackage{amssymb}
\usepackage{amsthm}
\usepackage{color}
\usepackage{graphicx}
\usepackage{ifthen}
\usepackage{mathrsfs} 
\usepackage{mathscinet} 
\usepackage{mathtools}
\usepackage{rotating}
\usepackage{stmaryrd}
\SetSymbolFont{stmry}{bold}{U}{stmry}{m}{n}   
\usepackage{linsys}
\usepackage{braket}
\usepackage{hyperref}
\usepackage{enumitem} 
\usepackage{mathtools}
\usepackage{pdfsync}

\newcommand*{\Z}{\mathbb{Z}}
\newcommand*{\Q}{\mathbb{Q}}
\newcommand*{\R}{\mathbb{R}}

\newcommand*{\verts}[1]{\left\lvert #1 \right\rvert}
\newcommand*{\braces}[1]{\left\lbrace #1 \right\rbrace}
\newcommand*{\parens}[1]{\left\lparen #1 \right\rparen}

\newcommand*{\card}{\verts}
\newcommand*{\setof}{\braces}

\newcommand*{\deftobe}{\mathrel{\coloneqq}}

\newcommand*{\maps}{\colon}
\newcommand*{\st}{\,:\,}

\renewcommand*{\epsilon}{\varepsilon}
\renewcommand*{\phi}{\varphi}
\renewcommand{\subset}{\subseteq}

\newtheorem{thm}{Theorem}[section]

\newtheorem{lemma}[thm]{Lemma}

\newtheorem{prop}[thm]{Proposition}

\theoremstyle{definition}

\theoremstyle{remark}

\newcommand{\ind}[1]{m_{#1}}

\renewcommand{\P}{\mathcal{P}}
\newcommand{\B}{\mathcal{B}}
\newcommand{\QQ}{\mathcal{Q}}
\newcommand{\RR}{\mathcal{R}}

\DeclareMathOperator{\conv}{Conv}

\newcommand*{\ehr}{\operatorname{ehr}}  
\newcommand*{\Ehr}{\operatorname{Ehr}}  

\newcommand{\pt}[1]{\mathbf{#1}}

\DeclareMathOperator{\Pyr}{\Delta}

\title{%
   Periods of Ehrhart coefficients of rational polytopes
}%

\author{Tyrrell B. McAllister\\
\small Department of Mathematics\\[-0.8ex]
\small University of Wyoming\\[-0.8ex] 
\small Laramie, WY, U.S.A.\\
\small\tt tmcallis@uwyo.edu\\
\and
H\'el\`ene O. Rochais%
\thanks{
    The second author was supported by a Wyoming EPSCoR Research
    Fellowship
}\\
\small Mathematics Department\\[-0.8ex]
\small California Institute of Technology\\[-0.8ex] 
\small Pasadena, CA, U.S.A.\\
\small\tt hrochais@caltech.edu\\
}

\date{January 31, 2018}

\begin{document}

\maketitle
   
\begin{abstract}
   Let $\P \subset \R^{n}$ be a polytope whose vertices have
   rational coordinates.  By a seminal result of E.~Ehrhart, the
   number of integer lattice points in the $k$th dilate of $\P$
   ($k$ a positive integer) is a quasi-polynomial function of~$k$
   --- that is, a ``polynomial'' in which the coefficients are
   themselves periodic functions of $k$.
   It is an open problem to determine which quasi-polynomials are
   the Ehrhart quasi-polynomials of rational polytopes.  As
   partial progress on this problem, we construct families of
   polytopes in which the periods of the coefficient functions
   take on various prescribed values.
\end{abstract}


\section{Introduction}

Let $\P \subset \R^{n}$ ($n \ge 2$) be a \emph{convex\footnote{We
will also consider nonconvex polytopes, as defined below.}
rational polytope} --- that is, the convex hull of finitely many
points in $\Q^{n}$.  By a famous theorem of Ehrhart
\cite{Ehr1962a}, the number of integer lattice points in positive
integer dilates $k\P$ of $\P$ is given by a quasi-polynomial
function of $k$.  In particular, there exist \emph{coefficient
functions} $c_{\P,i} \maps \Z \to \Q$ with finite periods such
that
\begin{equation}
   \label{eq:EhrhartQP}
   \card{k\P \cap \Z^{n}} = \sum_{i=0}^{n} c_{\P,i}(k) k^{i}, \qquad
   \text{for $k \in \Z_{\ge 1}$.}
\end{equation}
The function $\ehr_{\P} \maps \Z \to \Z$ defined by $\ehr_{\P}(x)
\deftobe \sum_{i=0}^{n} c_{\P,i}(x) x^{i}$ is the \emph{Ehrhart
quasi-polynomial} of $\P$.  (We refer the reader to
\cite{BecRob2007, Hib1992a, Sta1997} for introductions to Ehrhart
theory.)

The motivation for this paper is the problem of characterizing the
Ehrhart quasi-polynomials of rational polytopes.  It is well known
that, if $\P$ is an \emph{integral} polytope (meaning that all
vertices are in $\Z^{n}$), then the coefficients $c_{\P, i}$ are
constants and $\ehr_{\P}$ is simply a polynomial.  Already in this
case, the question of which polynomials are Ehrhart polynomials is
difficult.  Beginning with the pioneering work of Stanley
\cite{Sta1980, Sta1991}, Betke \& McMullen \cite{BetMcM1985}, and
Hibi \cite{Hib1994}, many inequalities have been shown to be
satisfied by the coefficients of Ehrhart polynomials of integral
polytopes.  (For recent work in this area, see \cite{BDLDPS05,
Bra2008, Bre2012, HNP2009, Hen2009, Pfe2010, Sta2016,
Sta2009} and references therein.)  Indeed, in the $2$-dimensional
case, a 1976 result of Scott \cite{Sco1976} completely
characterizes the Ehrhart polynomials of convex integral polygons.
Nonetheless, a complete characterization is not yet known for the
Ehrhart polynomials of convex integral polytopes in dimension $3$
or higher.

Much less is known about the characterization of Ehrhart
quasi-polynomials in the nonintegral case.  For example, even in
dimension $2$, we do not know which polynomials are the Ehrhart
polynomials of nonintegral convex polygons
\cite{McAMor2017}\footnote{Herrmann \cite{Her2010msthesis}
characterizes the Ehrhart quasi-polynomials of half-integral
not-neces\-sarily-convex polygons.}.

In this paper, we approach the problem of characterizing Ehrhart
quasi-poly\-no\-mials by focusing on the possible periods of the
coefficient functions $c_{\P,i}$ appearing in
equation~\eqref{eq:EhrhartQP}.  Define the \emph{period sequence}
of $\P$ to be $(p_{0}, p_{1}, \dotsc, p_{n})$, where $p_{i}$ is
the (minimum) period of $c_{\P,i}$.  Our question is thus:
\emph{What are the possible period sequences of rational
polytopes?}

If the interior of $\P$ is nonempty, then the leading coefficient
function $c_{\P,n}$ is a constant equal to the volume of $\P$, and
so $p_{n} = 1$.  More generally, McMullen \cite{McM1978} showed
that each coefficient period $p_{i}$ is bounded by the
corresponding \emph{$i$-index} of~$\P$.  The $i$-index is the
least positive integer $\ind{i}$ such that every $i$-dimensional
face of the dilate $\ind{i} \P$ contains an integer lattice point
in its affine span.  We call $(\ind{0}, \dotsc, \ind{n})$ the
\emph{index sequence} of $\P$.  McMullen proved the following.

\begin{thm}[McMullen \protect{\cite[Theorem 6]{McM1978}}]
\label{thm:McMullenBound}
   Let $\P$ be an $n$-dimensional rational polytope with period
   sequence $(p_{0}, \dotsc, p_{n})$ and index sequence $(\ind{0},
   \dotsc, \ind{n})$.  Then $p_{i}$ divides $\ind{i}$ for $0 \le i
   \le n$.  In particular, $p_{i} \le \ind{i}$.
\end{thm}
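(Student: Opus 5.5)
We prove the statement by showing that for each residue class $r$ modulo $\ind{i}$, the $k^{i}$-coefficient of the counting function, restricted to $k \equiv r \pmod{\ind{i}}$, does not depend on $r$ through anything finer than $r \bmod \ind{i}$. The natural tool is a decomposition of $\card{k\P\cap\Z^{n}}$ into contributions from faces, each of which is quasi-polynomial with an explicitly controlled period.

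\textbf{Step 1: reduction to simplices and cones.} Triangulate $\P$ into rational simplices without new vertices, and use inclusion--exclusion over relatively open faces. It suffices to prove the result for relatively open rational simplices $\sigma$, each a face of $\P$ or lying in a face of $\P$ of the same affine span up to dimension. The key point is that the $j$-dimensional open pieces lie in the relative interior of some face $F$ of $\P$ of dimension $\ge j$. Brion's theorem, or the Brion--Vergne / Barvinok decomposition, expresses the lattice-point count of $k\P$ as a sum over vertices of tangent-cone contributions. Alternatively, we use the McMullen approach: write $\ehr_{\P}(k)$ through the valuation $L(\P)=\sum_{F}\mu(F)\,\mathrm{vol}$-type expansion. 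Concretely, I would use the Euler--Maclaurin/local formula $\card{\P\cap\Z^{n}}=\sum_{F}\nu(\P,F)\,\mathrm{vol}_{\mathrm{rel}}(F)$ of McMullen. Here $\nu(\P,F)$ depends only on the normal cone of $F$ and on the position of $\mathrm{aff}(F)$ modulo the lattice, that is, on the translation class of $\mathrm{aff}(F)$ modulo $\Z^{n}$.

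\textbf{Step 2: degree bookkeeping.} Apply this formula to $k\P$. The face $kF$ has relative volume $k^{\dim F}\mathrm{vol}(F)$. The normal cone is unchanged by dilation. The affine span $\mathrm{aff}(kF)$ modulo $\Z^{n}$, viewed as a translate of the fixed linear space $\mathrm{lin}(F)$, depends only on $k$ modulo the least $m$ such that $\mathrm{aff}(mF)$ contains a lattice point. This is because $\mathrm{aff}(kF)=kv+\mathrm{lin}(F)$, and adding $m$ to $k$ shifts it by $mv$, which lies in $\Z^{n}+\mathrm{lin}(F)$. Hence
\[
c_{\P,i}(k)=\sum_{\dim F=i}\nu(\P,kF)\,\mathrm{vol}(F),
\]
and each summand is periodic with period dividing the minimal integer $m_F$ attached to $F$. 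Since $\ind{i}$ is a common multiple of all $m_F$ with $\dim F=i$, by the definition of the $i$-index, $c_{\P,i}$ has $\ind{i}$ as a period. The minimal period therefore divides $\ind{i}$, and so $p_{i}\le\ind{i}$. A subtlety is that $m_F\mid\ind{i}$: the set of $m$ with $\mathrm{aff}(mF)\cap\Z^{n}\neq\emptyset$ is closed under addition and under differences whenever these are positive, so it is the set of multiples of its minimum.

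\textbf{Main obstacle.} The hard step is justifying the existence of the local coefficients $\nu$ with the stated invariance. They must depend only on the normal cone and on the lattice translation class of the affine span, and this must hold for rational, not just lattice, polytopes. I would construct them following McMullen: define them on rational cones via a choice of complement to $\mathrm{lin}(F)$. The alternative is Berline--Vergne's local Euler--Maclaurin formula, which gives exactly this valuation-theoretic invariance and for which translation by a lattice vector is manifestly harmless. If that proves heavy, a fallback is to triangulate, and then prove directly for each relatively open simplex, by a generating-function computation with Barvinok-style cone decompositions, that its count is a quasi-polynomial whose $k^{i}$-coefficient has period dividing the indices of the $\ge i$-dimensional faces containing it.
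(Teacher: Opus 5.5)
The paper gives no proof of this statement (it is quoted from McMullen), so there is no internal argument to compare against. Judged on its own, your main line is a correct proof, and it is the standard derivation of the result.

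Once you quote the local formula $\card{\QQ\cap\Z^{n}}=\sum_{F}\nu(\QQ,F)\,\mathrm{vol}(F)$, Step~2 goes through. The formula must hold for all rational polytopes $\QQ$, with $\nu(\QQ,F)$ depending only on the tangent cone of $\QQ$ at $F$ modulo translation by $\Z^{n}$. This is a genuine theorem (McMullen's existence result, made canonical by Berline--Vergne), so citing it is legitimate. For $v$ in the relative interior of $F$, the tangent cone of $k\P$ at $kF$ is $kv+C_{F}$, with $C_{F}$ independent of $k$. Moreover, $(k+m_{F})v+C_{F}$ is a lattice translate of $kv+C_{F}$, because $m_{F}v\in\Z^{n}+\mathrm{lin}(F)$ and $\mathrm{lin}(F)$ is the lineality space of $C_{F}$. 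Your argument that $m_{F}\mid\ind{i}$ is right. Two small additions: state the (easy) uniqueness of the coefficient functions of a quasi-polynomial, which is what identifies the periodic factor of $k^{i}$ in the local expansion with $c_{\P,i}$; and write $\nu(k\P,kF)$ rather than $\nu(\P,kF)$.

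You should delete Step~1 and the fallback at the end, because the reduction they rest on is false. Pieces of a triangulation do \emph{not} inherit McMullen's bounds from $\P$. Triangulating creates new faces whose affine spans can be worse than those of any face of $\P$, and the bound for $\P$ only appears after cancellation.

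For example, take $\P=\conv\{(0,0),(\tfrac12,-\tfrac12),(1,0),(\tfrac12,\tfrac12)\}$. All four edge lines meet $\Z^{2}$, so $\ind{1}=1$. Indeed $\ehr_{\P}(k)$ equals $\tfrac12k^{2}+k+1$ or $\tfrac12k^{2}+k+\tfrac12$ according as $k$ is even or odd. Cutting along the diagonal $x=\tfrac12$ gives two triangles $T$ with $\ehr_{T}(k)=(\lfloor k/2\rfloor+1)^{2}$. Their linear coefficient is $1$ for even $k$ and $\tfrac12$ for odd $k$. The open triangle lies in the interior of $\P$ and also has a linear coefficient of period $2$, so the per-simplex claim of your fallback fails. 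These period-$2$ parts cancel only against the diagonal, whose lattice-point count is $k+1$ or $0$. The local formula avoids this problem because its sum runs over faces of $\P$ alone, so it should carry the whole proof.
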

We will refer to the inequalities $p_{i} \le \ind{i}$ in Theorem
\ref{thm:McMullenBound} as \emph{McMullen's bounds}.  It is easy
to see that the indices $\ind{i}$ of a rational polytope satisfy
the divisibility relations $\ind{n} \mid \ind{n-1} \mid \dotsb
\mid \ind{0}$ and hence that $\ind{n} \le \ind{n-1} \le \dotsb \le
\ind{0}$.  Beck, Sam, and Woods \cite{BecSamWoo2008} showed that
McMullen's bounds are always tight in the $i = n-1$ and \mbox{$i =
n$} cases.  It is also shown in \cite{BecSamWoo2008} that, given
any positive integers \mbox{$m_{n} \mid m_{n-1} \mid \dotsb \mid
m_{0}$}, there exists a polytope with $i$-index $m_{i}$ for $0 \le
i \le n$.  Moreover, all of McMullen's bounds are tight for this
polytope.  This construction establishes the following.

\begin{thm}[\protect{Beck et~al.~\cite{BecSamWoo2008}}]
   \label{thm:BeckSamWoods}
   Let positive integers $p_{n-1} \mid p_{n-2} \mid \dotsb \mid
   p_{0}$ be given.  Then there exists an $n$-dimensional convex
   polytope in $\R^{n}$ with period sequence $(p_{0}, \dotsc,
   p_{n-1}, 1)$.
\end{thm}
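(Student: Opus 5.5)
We construct an explicit polytope whose $i$-index equals $p_{i}$ for every $i$, and then show that each of McMullen's bounds is tight for it. Theorem~\ref{thm:McMullenBound} gives $p_i \mid \ind{i}$, so it suffices to show that the period of $c_{\P,i}$ is at least $\ind{i}$.

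\emph{Construction.} Set $q_{i} \deftobe p_{i-1}/p_{i}$ for $1 \le i \le n-1$ and $q_{n} \deftobe p_{n-1}$, where $p_{n} = 1$. These are integers because $p_{n-1} \mid \dotsb \mid p_{0}$. Then $p_{i} = \prod_{j > i} q_{j}$. We take $\P$ to be the simplex
\[
   \P \deftobe \conv\setof{0,\ \tfrac{1}{p_{0}}e_{1},\ \tfrac{1}{p_{1}}e_{2},\ \dotsc,\ \tfrac{1}{p_{n-1}}e_{n}} + \text{(translation as needed)},
\]
or a suitable product or pyramid variant of it. The design goal is that each $i$-face has its affine span meeting $\Z^n$ only after dilation by a multiple of $p_i$. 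A cleaner alternative is a rectangular box $\prod_{j=1}^{n}[0, a_{j}]$ with $a_{j}$ rational. Its Ehrhart function factors as $\prod_j(\lfloor k a_j\rfloor + 1)$, which makes the coefficients explicitly computable. We choose
\[
   a_{j} = 1/p_{j-1} \quad \text{for } j \le n-1, \qquad a_{n} = \text{an integer}.
\]
The index check should be tuned so that an $i$-face's span is free in $i$ coordinates and fixed in the remaining $n-i$. The index of that face is then the lcm of the denominators of its fixed coordinates. Since the coordinates are nested by divisibility, the worst fixed set gives lcm exactly $p_i$.

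\emph{Coefficient computation.} For the box,
\[
   \ehr_{\P}(k) = \prod_{j}\bigl(k a_j + 1 - \{k a_j\}\bigr),
\]
where $\{\cdot\}$ is the fractional part. Expanding gives
\[
   c_{\P,i}(k) = e_{i}\text{-type sums of products } \prod_{j \in S} a_j \prod_{j \notin S}\bigl(1 - \{k a_j\}\bigr), \quad \card{S} = i.
\]
The term with $S$ the $i$ coordinates having the \emph{largest} denominators omitted involves $\{k/p_{i-1}\}$-type factors. We then evaluate $c_{\P,i}$ at $k$ and $k + p_i/\ell$ for each prime $\ell \mid p_i$ and show the values differ. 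This shows the minimal period is exactly $p_i$.

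\emph{Main obstacle.} The hardest step is this lower bound on the period: ruling out cancellation among the various terms in $c_{\P,i}$ so that the period is not a proper divisor of $p_i$. Our first approach is to evaluate at $k \equiv 0$ versus $k \equiv p_i/\ell \pmod{p_i}$. The factors with denominators dividing $p_i/\ell$ coincide at these two points. At least one factor $1 - \{k a_j\}$ with $p_i \mid \operatorname{denom}(a_j)$ changes. Because all quantities have a definite sign, with each factor $1 - \{\cdot\} \in (0,1]$, the difference is a sum of terms of one sign and cannot vanish.

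If the box fails to give exact indices, because the divisibility chain is used only through the lcm, we fall back on the simplex above. For the simplex we use the known tightness for $i = n-1, n$ (Beck--Sam--Woods) together with an induction on dimension via pyramids over lower-dimensional faces.
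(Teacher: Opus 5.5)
Your overall strategy is the one the paper takes from Beck--Sam--Woods: realize the index sequence $(p_{0},\dotsc,p_{n-1},1)$, then show McMullen's bounds are tight. But the box you settle on cannot work, and the fallback leaves the main step unproved.

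\textbf{The box fails.} The $i$-index is controlled by the \emph{worst} $i$-face, not the best. For every $i\le n-1$ the box has $i$-faces inside the facet $x_{1}=a_{1}=1/p_{0}$. Their affine spans, dilated by $k$, meet $\Z^{n}$ only when $p_{0}\mid k$. So $\ind{i}=p_{0}$ for all $i<n$, and Theorem~\ref{thm:McMullenBound} gives nothing better than periods dividing $p_{0}$.

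In fact the periods really are $p_{0}$. In your expansion $c_{\P,i}(k)=\sum_{\card{S}=i}\prod_{j\in S}a_{j}\prod_{j\notin S}(1-\{ka_{j}\})$:
\begin{itemize}
\item all weights are positive;
\item every factor $1-\{ka_{j}\}$ equals its maximum $1$ when $p_{0}\mid k$;
\item the factor $1-\{k/p_{0}\}$ is strictly less than $1$ otherwise.
\end{itemize}
For $i\le n-1$ some $S$ with $\card{S}=i$ omits the index $1$. Hence each of $c_{\P,0},\dotsc,c_{\P,n-1}$ attains its maximum exactly on multiples of $p_{0}$, and so has period exactly $p_{0}$. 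Your own sign argument, run correctly, shows the construction fails. For $n=2$, the box $[0,1/p_{0}]\times[0,N]$ has linear coefficient $N(1-\{k/p_{0}\})+1/p_{0}$, so $(p_{0},1,1)$ with $p_{0}>1$ is out of reach. Note also that $p_{n-1}$ never enters your box at all.

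\textbf{The fallback simplex is right, but the key step is missing.} The simplex $\conv\setof{\pt{0},\frac{1}{p_{0}}\pt{e}_{1},\dotsc,\frac{1}{p_{n-1}}\pt{e}_{n}}$ is the right object. The paper's $S_{n}$ is, after $x_{1}\mapsto -x_{1}$, its $(n-1)$-dimensional case with $p_{0}=p$ and all other $p_{i}=1$. Its index computation does go through:
\begin{itemize}
\item the face with vertices $\frac{1}{p_{j_{r}}}\pt{e}_{j_{r}+1}$, $j_{0}<\dotsb<j_{i}$, spans $\sum_{r}p_{j_{r}}x_{j_{r}+1}=1$ (other coordinates $0$);
\item its $k$th dilate meets $\Z^{n}$ if and only if $\gcd_{r}p_{j_{r}}=p_{j_{i}}$ divides $k$;
\item faces through $\pt{0}$ have index $1$;
\end{itemize}
hence $\ind{i}=p_{i}$.

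The real content of the theorem is that the period of $c_{\P,i}$ is not a proper divisor of $p_{i}$. You leave this to an unspecified induction, and the pyramid induction fails as described. The apex $\frac{1}{p_{n-1}}\pt{e}_{n}$ is not a lattice point. So the Ehrhart series is divided by $1-t^{p_{n-1}}$, not by $1-t$ as in Proposition~\ref{prop:PentagonEquivToSegment-Pyramids}, and one gets $\Ehr_{\P}(t)=1/\bigl((1-t)\prod_{j=0}^{n-1}(1-t^{p_{j}})\bigr)$.

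What is missing is a pole analysis of this function:
\begin{itemize}
\item At a primitive $d$th root of unity ($d>1$) the pole order is $\#\setof{j : d\mid p_{j}}$. This is at least $i+1$ if and only if $d\mid p_{i}$, which recovers the upper bound.
\item You must then show that the primitive $p_{i}$th roots actually contribute to the coefficient of $k^{i}$.
\item This is automatic when $i=n-1$ or $p_{i+1}\ne p_{i}$, because the $k^{i}$ term is then the leading term of that pole's contribution.
\item When $p_{i+1}=p_{i}$ it is a subleading term. Its nonvanishing needs a genuine argument, and your proposal does not supply one.
\end{itemize}
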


The period sequences realized by Theorem \ref{thm:BeckSamWoods}
must satisfy the inequalities \mbox{$p_{0} \ge \dotsb \ge
p_{n-1}$}.  In the following sections, we extend the set of known
period sequences by constructing rational polytopes in which the
period sequences do not satisfy these inequalities.
Alternatively, our constructions may be thought of as examples in
which a particular one of McMullen's bounds is arbitrarily far
from tight.  Our first main result is the following, which is
proved in Section \ref{sec:1p11}.

\begin{thm}
   \label{thm:1p11}
   Let a positive integer $p$ be given.  Then there exists an
   $n$-dimensional convex rational polytope in $\R^{n}$ with
   period sequence $(1, p, 1, \dotsc, 1)$.
\end{thm}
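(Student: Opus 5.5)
The plan is to build the polytope explicitly, first in dimension~$2$ and then in general. In each case the goal is an exact formula for $\ehr_{\P}$ in which every nonpolynomial (periodic) contribution appears multiplied by exactly one factor of $k$. McMullen's bounds (Theorem~\ref{thm:McMullenBound}) show that $p_{i} \mid \ind{i}$, so the natural design is to place the rationality of $\P$ only in \emph{how} facets and edges sit relative to the lattice, never in where vertices sit. The vertices of $\P$ should be integral, or at least every vertex cone should be a lattice translate of its own dilates, which forces $\ind{0}=1$ and hence $p_{0}=1$. On the other hand, some $1$-dimensional faces (or $1$-dimensional ``slices'' in a fibration of $\P$) should have affine spans that meet $\Z^{n}$ only at dilates divisible by $p$, which forces the linear term to see $p$.

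\textbf{Step 1 (model computation).} Write $\P$ as a union of lattice-slab pieces whose counts can be done by summing over rows. Counting the lattice points in an integer-length interval $[x,x+L]$ gives $L+1$ or $L$ according to whether $x\in\Z$. For a triangle with one vertex $(a/p,1)$, $\gcd(a,p)=1$, this produces the term $\lfloor k/p\rfloor+1 = k/p + 1 - \{k/p\}$. The periodic term $-\{k/p\}$ lands in the constant coefficient, which is exactly what we must avoid. The fix I would try is to glue two such pieces with opposite ``shears'' along a common edge (a possibly nonconvex union first, then adjusted to be convex). The choice should make the two $\{k/p\}$ contributions to $c_{\P,0}$ cancel in the inclusion--exclusion $\ehr_{A\cup B}=\ehr_A+\ehr_B-\ehr_{A\cap B}$. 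At the same time, the edge sitting at a level like $y=k/p$ contributes a term of the form $k\cdot[\,p\mid k\,]\cdot(\text{length})$, which survives and has period exactly $p$ in the $k^{1}$ coefficient. The target in dimension~$2$ is thus an identity
\[
\ehr_{\P}(k) = A k^{2} + f(k)\,k + 1, \qquad f \text{ of minimum period } p.
\]

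\textbf{Step 2 (lifting to dimension $n$).} Taking products with $[0,1]^{n-2}$ is bad, because multiplying by $(k+1)^{n-2}$ spreads the periodic $f$ into all coefficients $c_{\P,i}$ with $i\ge 1$. Instead, I would build the $n$-dimensional example directly by the same principle. Take $\P=\conv(Q\cup S)$, where $S$ is a lattice simplex of dimension $n-2$ and $Q$ is a $2$-dimensional piece as in Step~1, placed so that the $k$th dilate fibres over an integral $(n-2)$-dimensional base. The fibres should be translates of a fixed planar configuration, and the single rational edge should be the only source of nonpolynomiality. Summing over fibres then contributes only integral-polynomial terms together with one term $k\,f(k)$, and one must check that this term is not multiplied by further powers of $k$. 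This step needs an explicit choice of coordinates, and I would verify it by the same row-counting argument.

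\textbf{Main obstacle.} The hard step is arranging the cancellation in $c_{\P,0}$ while keeping $\P$ \emph{convex} and not letting the periodic part leak into $c_{\P,i}$ for $i\ge 2$. I expect the cleanest route to be to first find a nonconvex example satisfying the identity above and then to show that adding integral convex pieces, whose Ehrhart functions are honest polynomials, repairs convexity without changing periods. Finally, I would confirm minimality of the period $p$ by evaluating $f$ at $k=p$ and at $k=1,\dots,p-1$.
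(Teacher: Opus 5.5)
\textbf{Gap 1: the design principle and the cancellation mechanism in Step~1.} Your guiding principle cannot work. If every vertex is integral ($\ind{0}=1$, as you intend), then $\ind{1}\mid\ind{0}$ forces $\ind{1}=1$, so Theorem~\ref{thm:McMullenBound} gives $p_{1}=1$. In fact $\P$ is then a lattice polytope, and no edge can have the lattice-avoiding affine span you ask for in the next sentence. Since $p=p_{1}\mid\ind{1}\mid\ind{0}$, necessarily $\ind{0}\ge p$, so $p_{0}=1$ can only come from cancellation of periodic parts.

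Step~1 does aim for cancellation, but the mechanism you propose does not supply it. Take $\gcd(a,p)=1$. The $k$th dilate of $\conv\{\pt{0},\pt{e}_{1},(a/p,1)\}$ contains $\tfrac{k(k+1)}{2}+\lfloor k/p\rfloor+1$ lattice points. The ``opposite shear'' $a\mapsto -a$ gives the same count, because $p\mid ay$ if and only if $p\mid y$. So both pieces are $\equiv\ehr_{\ell}$ with the \emph{same} sign. Gluing them along an edge through the rational apex subtracts another term $\equiv\ehr_{\ell}$; gluing along a lattice edge subtracts nothing periodic. Either way the periodic parts reinforce rather than cancel.

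What is actually needed is a rational polygon $Q$ with $c_{Q,1}$ constant and $\ehr_{Q}\equiv-\ehr_{\ell}$, i.e., one contributing $+\{k/p\}$ to the constant term. That is the pentagon $P$ of Section~\ref{sec:BuildingBlocks}, with apex $\frac{q}{p}\pt{e}_{2}$ and $q=p^{2}-p+1$, whose apex cone is tuned to make this hold. Only with such a piece does your edge at height $k/p$ do its job: $\ehr_{H}\equiv(2qx+1)\ehr_{\ell}-\ehr_{\ell}=2qx\,\ehr_{\ell}(x)$. Without an explicit polygon of this kind and a verification of its Ehrhart function, Step~1 is a hope, not a proof.

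\textbf{Gap 2: the lift to dimension $n$ in Step~2.} Suppose the fibres of $k\P$ over the lattice points of the base are lattice translates of one planar configuration. Then $\ehr_{\P}$ factors as (base count)$\times$(fibre count), which is exactly the product you ruled out. Suppose instead the fibres shrink, as they do in $\conv(Q\cup S)$. Then you are taking iterated pyramids over a finished planar example, and the periodicity still leaks. A pyramid has Ehrhart function $\sum_{j=0}^{k}\ehr_{Q}(j)$, and partial sums of $j\,f(j)$ with $f$ periodic in general have periodic parts in both $c_{1}$ and $c_{0}$. For example, with $p=2$ the pyramid over the heptagon $H$ has period sequence $(2,2,1,1)$.

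The missing idea is to take pyramids over the \emph{building blocks} before cancelling, not over the finished polygon. Since $\Ehr_{\Pyr(\QQ)}(t)=\Ehr_{\QQ}(t)/(1-t)^{i}$, the relation $\ehr_{P}\equiv-\ehr_{\ell}$ lifts to $\ehr_{P_{n}}\equiv-\ehr_{S_{n}}$. Here $S_{n}$ is an $(n-2)$-fold pyramid over $\ell$ whose periodic part lives in $c_{0}$ alone. A single prism factor then shifts that periodicity up one degree: $W_{n}=([-q,q]\times S_{n})-q\pt{e}_{2}$ contributes $(2qx+1)\ehr_{S_{n}}$ and $P_{n}$ contributes $-\ehr_{S_{n}}$, giving $\ehr_{H_{n}}\equiv 2qx\,\ehr_{S_{n}}(x)$.

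Your convexity repair does match the paper's last step, but it needs more than integral filler pieces. One must show $\conv(W_{n}\cup P_{n})=W_{n}\cup M_{n}\cup P_{n}$ with $M_{n}$ \emph{and} both intersections $W_{n}\cap M_{n}$, $M_{n}\cap P_{n}$ lattice polytopes. That is a facet-visibility check which depends on the specific placement, hence the shift by $-q\pt{e}_{2}$.
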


In Theorem \ref{thm:1p11}, we achieve \emph{convex} polytopes.  In
other cases, we are unable to find convex constructions and must
consider nonconvex rational polytopes.  In general, we call a
topological ball $\B$ in $\R^{n}$ a \emph{rational polytope} (not
necessarily convex) if $\B$ is a union $\bigcup_{i \in I} \P_{i}$
of a finite family $\setof{\P_i \st i \in I}$ of convex rational
polytopes, all with the same affine span, in which every nonempty
intersection $\P_{i} \cap \P_{j}$, $i \ne j$, is a common facet of
$\P_{i}$ and~$\P_{j}$.

Our second main result, proved in Section \ref{sec:TheBarn}, is
the construction of \emph{non}convex polytopes with period
sequences of the form $(1, \dotsc, 1,p,1)$.
\begin{thm}
   \label{thm:TheBarn}
   Let a positive integer $p$ be given.  Then there exists an
   $n$-dimensional nonconvex polytope in $\R^{n}$ with period
   sequence $(1, \dotsc, 1, p, 1)$, provided that either \mbox{$3
   \le n \le 11$} or $n = 13$.
\end{thm}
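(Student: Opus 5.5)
We will build the polytope by gluing two convex rational pieces along a common facet, and control the codimension-one coefficient through Ehrhart--Macdonald reciprocity and the boundary. Concretely, for a (possibly nonconvex) rational polytope $\B$ that is a topological ball, reciprocity gives $\ehr_{\B}(-k) = (-1)^{n}\card{\operatorname{int}(k\B)\cap\Z^{n}}$. Hence
\[
c_{\B,n-1}(k)k^{n-1} = \tfrac12\bigl(\ehr_{\B}(k) - (-1)^{n}\ehr_{\B}(-k)\bigr)\ \text{(odd/even part bookkeeping)},
\]
and the lower-order terms are controlled the same way. So $c_{\B,n-1}$ is essentially one half of the ``boundary lattice-point quasi-polynomial'' $\card{\partial(k\B)\cap\Z^{n}}$ divided by $k^{n-1}$. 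The plan is therefore to design $\B$ so that its facets are integral except for a pair of facets, or a family of facets, whose affine spans contain lattice points only in dilates divisible by $p$. We also want the total $(n-1)$-dimensional relative lattice volume of those facets to vary with $k \bmod p$, while every other coefficient is forced to be constant.

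The construction I would try is a ``barn'': take an integral prism $Q\times[0,1]$, with $Q$ an integral $(n-1)$-polytope, and attach to its top facet a roof. The roof is a union of two convex pieces meeting along a ridge lying on a rational hyperplane such as $x_{n}=1+1/p$ translated appropriately. The ridge is chosen so that the two sloped roof facets have rational affine spans with $(n-1)$-index $p$. Their contributions to $c_{\B,n-1}$ then have period $p$. We choose slopes so that these contributions do not cancel, which is why convexity must fail: in a convex version the two roof facets would combine like a single facet and the fluctuation would cancel. For the lower coefficients $c_{\B,i}$, $i\le n-2$, I would compute $\ehr_{\B}$ explicitly. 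Decompose $\B$ into the integral prism (polynomial count) and the roof pieces, and count lattice points in $k\cdot$roof slice by slice in the $x_n$ direction. Each slice is an integral $(n-1)$-polytope dilated by a piecewise-linear function of the height, so the count reduces to sums of Ehrhart polynomials of $Q$-like polytopes over arithmetic ranges. These sums of the form $\sum_{j} L_{Q}(\lfloor \alpha j\rfloor)$ produce quasi-polynomial corrections via Bernoulli-type sums of $\{j/p\}$.

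The key step is then to tune the free parameters, namely the cross-section $Q$, the slopes, and the ridge position, so that all periodic corrections vanish in degrees $0,\dots,n-2$ while surviving in degree $n-1$. The corrections in degree $i$ involve products of Ehrhart coefficients of $Q$ with sums like $\sum_{r=0}^{p-1}\bar B_{m}(r/p)$ type periodic Bernoulli functions. Killing them amounts to a finite system of polynomial equations in these coefficients. I expect this to be the main obstacle: the number of conditions grows with $n$, and one needs integer solutions, for example suitable Ehrhart coefficients of integral polytopes such as products of simplices or cross-polytopes. Solvability should be checked dimension by dimension. This is consistent with the restriction to $3\le n\le 11$ or $n=13$, where I expect a suitable solution to the cancellation system to be found, perhaps by computer search over small products of standard integral polytopes. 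Finally, McMullen's bound, Theorem~\ref{thm:McMullenBound}, guarantees that $p_{n-1}$ divides $p$. A direct evaluation at $k=1$ versus $k=p$ of the boundary count shows the minimal period is exactly $p$, and $p_n=1$ since $\B$ is full-dimensional.
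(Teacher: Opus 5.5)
Your proposal leaves the decisive step unproved, so there is a genuine gap. You need a cross-section $Q$, slopes and ridge position for which every periodic contribution in degrees $0,\dots,n-2$ cancels while the one in degree $n-1$ survives. You only say you expect a computer search to find these. The roof itself is never actually specified, and the restriction $3\le n\le 11$ or $n=13$ is matched after the fact rather than explained.

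What is missing is a mechanism that makes the cancellation structural rather than a matter of tuning Bernoulli-type sums. The paper uses two ingredients. First, it uses a piece whose periodic part is exactly opposite to a known one: the pentagon $P$ of Section~2 satisfies $\ehr_P(x)\equiv-\ehr_\ell(x)$ for $\ell=[-\tfrac1p,0]$. Second, it takes products with integral boxes, so that each piece's periodic part is $c_{\ell,0}(x)$ times an explicit polynomial:
\[
B_n=\Bigl(\prod_{i=1}^{n-1}[0,s_i]\Bigr)\times\ell\;\cup\;\Bigl(\prod_{j=1}^{n-2}[0,t_j]\Bigr)\times P,
\qquad
\ehr_{B_n}(x)\equiv\Bigl(\prod_{i}(s_ix+1)-\prod_{j}(t_jx+1)\Bigr)\ehr_\ell(x),
\]
where the two pieces meet in a lattice polytope. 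Killing degrees $0,\dots,n-2$ then means $e_k(s)=e_k(t)$ for $k\le n-2$ (with $t_{n-1}=0$). By Newton's identities this is precisely the ideal Prouhet--Tarry--Escott problem in $n-1$ variables. The known solutions for $n-1\in\{2,\dots,10,12\}$ are the actual source of the dimension restriction. One gets $\ehr_{B_n}(x)\equiv s_1\cdots s_{n-1}\,c_{\ell,0}(x)\,x^{n-1}$, and the minimal period $p$ is inherited directly from $c_{\ell,0}$, with no slice-by-slice analysis.

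Several supporting steps are also wrong.
\begin{itemize}
\item Reciprocity does not isolate $c_{\B,n-1}$. The $k^{n-1}$-coefficient of $\ehr_\B(k)-(-1)^n\ehr_\B(-k)$ is $c_{\B,n-1}(k)+c_{\B,n-1}(-k)$, so the boundary count only sees the even part of the coefficient. For rational polytopes this is not $2c_{\B,n-1}(k)$ in general: for example, $c_{\ell,0}(k)=1+\lfloor k/p\rfloor-k/p$ is not an even function when $p\ge 3$.
\item Having $(n-1)$-index $p$ gives only $p_{n-1}\mid p$ via McMullen's bound, not equality.
\item Checking $c(1)\ne c(p)$ rules out period $1$ but not a proper divisor of $p$ such as $p/2$.
\item Your claim that convexity must fail is unsupported. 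For $n=2$ the convex heptagon $H$ already has period sequence $(1,p,1)$, and the nonconvexity of $B_n$ is a feature of the paper's construction, not a proven necessity.
\end{itemize}
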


\section{Building blocks}
\label{sec:BuildingBlocks}

In this section, we fix notation and recall results that will be
used in the constructions below.  We also establish Theorems
\ref{thm:1p11} and \ref{thm:TheBarn} in the dimension $n = 2$
case.

We are interested in the period sequences of quasi-polynomials.
This period sequence is invariant under addition of polynomials.
Thus, it will be convenient to consider quasi-polynomials $f(x)$
and $g(x)$ to be \emph{equivalent} when $f(x) - g(x)$ is a
polynomial.  In this case, we write $f(x) \equiv g(x)$.  In
particular, if $f(x) \equiv g(x)$, then $f(x)$ and $g(x)$ have the
same period sequence.  The chief convenience of this notation is
that, if $\QQ \cup \RR$ is a union of rational polytopes $\QQ$ and
$\RR$ such that $\QQ \cap \RR$ is integral, then $\ehr_{\QQ \cup
\RR} (x) \equiv \ehr_{\QQ}(x) + \ehr_{\RR}(x)$.

The constructions in the following sections depend on certain
$2$-dimensional polygons studied in \cite{McAMor2017}.  Let $p$ be
a positive integer and set $q \deftobe p^{2}-p+1$.  (Typically,
$p$ will be the desired period of a coefficient function in the
Ehrhart quasi-polynomial of a rational polytope.)  Let $\ell
\subset \R$ be the closed segment $[-\tfrac{1}{p}, 0]$.  Then the
Ehrhart quasi-polynomial of $\ell$ has the form
\begin{equation*}
   \ehr_{\ell}(x) = \tfrac{1}{p} x + c_{\ell, 0}(x),
\end{equation*}
where $c_{\ell, 0}$ has (minimum) period $p$.  Let $P \subset
\R^{2}$ be the convex pentagon with vertices $\pt{u}^{+}$,
$\pt{u}^{-}$, $\pt{v}^{+}$, $\pt{v}^{-}$, $\pt{w}$, where
\begin{align*}
   \pt{u}^{\pm} & \deftobe \pm q \pt{e}_{1}, &
   \pt{v}^{\pm} & \deftobe \pm (q-1) \pt{e}_{1} + \pt{e}_{2} , &
   \pt{w} & \deftobe \frac{q}{p}\pt{e}_{2}.
\end{align*}
(We write $\pt{e}_{i}$ for the $i$th standard basis vector.)  A
key fact, proved in \cite{McAMor2017}, is that the Ehrhart
quasi-polynomials of $P$ and $\ell$ are ``complements'' of each
other in the sense that the periodic parts of their coefficients
cancel when the quasi-polynomials are added together\footnote{In
\cite{McAMor2017}, the pentagon $P$ was reflected about the
diagonal $x = y$.}.  That is,
\begin{equation}
   \label{eq:PentagonEquivToSegment}%
   \ehr_{P}(x) \equiv -\ehr_{\ell}(x).
\end{equation}

As a warm-up for the following sections, we recall how $P$ and
$\ell$ were used in \cite{McAMor2017} to construct a polygon with
period sequence $(1,p,1)$.  Let $R\subset \R^{2}$ be the rectangle
$[-q, q] \times \ell$, and consider the convex heptagon $H
\deftobe \conv(R \cup P) \subset \R^{2}$.  Note that $R$ and $P$
are rational polygons whose intersection is the lattice segment
with endpoints $\pt{u}^{\pm}$.  Hence we can use equivalence
\eqref{eq:PentagonEquivToSegment} to compute that
\begin{align*}
   \ehr_{H}(x) & \equiv (2q x + 1) \ehr_{\ell}(x) + \ehr_{P}(x) \\
    & \equiv  (2q x + 1) \ehr_{\ell}(x) - \ehr_{\ell}(x) \\
    & = 2q x \ehr_{\ell}(x) \\
    & \equiv 2q \, c_{\ell, 0}(x) \, x.
\end{align*}
That is, $H$ has period sequence $(1, p, 1)$.  This establishes
the $n=2$ cases of both Theorem \ref{thm:1p11} and Theorem
\ref{thm:TheBarn}.

\section{Convex polytopes with period sequence \texorpdfstring{$(1, p, 1, \dotsc, 1)$}{(1, p, 1, ..., 1)}}
\label{sec:1p11}

Let a positive integer $p \ge 1$ be given.  Recall that we set $q
\deftobe p^{2} - p + 1$.  We now prove Theorem~\ref{thm:1p11} by
constructing a convex rational polytope $H_{n} \subset \R^{n}$
with period sequence $(1,p,1, \dotsc, 1)$.  Since the $n=2$ case
was established in the previous section, we assume that $n \ge 3$.

A useful fact about equivalence \eqref{eq:PentagonEquivToSegment}
is that it continues to hold when we take \mbox{$i$-fold} pyramids
over both $P$ and $\ell$.  More precisely, let $\QQ \subset
\R^{d}$ be a polytope, and let $\QQ'$ be the embedded copy of
$\QQ$ in $\R^{d+1}$ defined by $\QQ' \deftobe \setof{(\pt{x}, 0)
\in \R^{d+1} \st \pt{x} \in \QQ}$.  Fix a point $\pt{a} \in
\Z^{d+1}$ with final coordinate equal to $1$.  Then $\conv(\QQ'
\cup \setof{\pt{a}})$ is a ($1$-fold) \emph{pyramid} over $\QQ$.
By induction, for $i \ge 2$, define an \emph{$i$-fold pyramid}
over $\QQ$ to be a pyramid over an $(i-1)$-fold pyramid over
$\QQ$.

\begin{prop}
   \label{prop:PentagonEquivToSegment-Pyramids}
   Let $P$ and $\ell$ be the pentagon and line segment defined in
   the previous section, and let $\Pyr(P)$ and $\Pyr(\ell)$ be
   $i$-fold pyramids over $P$ and $\ell$, respectively.  Then
   \begin{equation}
   \label{eq:PentagonEquivToSegment-Pyramids}
      \ehr_{\Pyr(P)}(x) \equiv -\ehr_{\Pyr(\ell)}(x).
   \end{equation}
\end{prop}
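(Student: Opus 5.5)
By induction on $i$, it suffices to handle $i=1$. So the plan is to show that if $\QQ\subset\R^d$ is a rational polytope and $\Pyr(\QQ)=\conv(\QQ'\cup\setof{\pt{a}})$ with $\pt{a}\in\Z^{d+1}$ of last coordinate $1$, then $\ehr_{\Pyr(\QQ)}$ is determined, up to $\equiv$, by $\ehr_{\QQ}$ through an operation that is linear and preserves $\equiv$. Applying this operation to $\ehr_P+\ehr_\ell\equiv 0$ then gives \eqref{eq:PentagonEquivToSegment-Pyramids}. Since the $i$-fold pyramids are built step by step, the inductive hypothesis supplies the equivalence for the $(i-1)$-fold pyramids, which are again rational polytopes.

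\textbf{Slicing.} For a positive integer $k$, cut $k\Pyr(\QQ)$ by the hyperplanes $x_{d+1}=j$ for $j=0,\dots,k$; every lattice point lies on exactly one of them. The slice at height $j$ is $(k-j)\QQ' + j\pt{a}$. Because $j\pt{a}$ is an integer translation, this slice contains exactly $\ehr_{\QQ}(k-j)$ lattice points, where we set $\ehr_{\QQ}(0)=1$. Reindexing with $m=k-j$ gives
\[
\ehr_{\Pyr(\QQ)}(k)=\sum_{m=0}^{k}\ehr_{\QQ}(m).
\]
The map $S\maps f\mapsto \bigl(k\mapsto \sum_{m=0}^{k} f(m)\bigr)$ is linear in $f$.

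\textbf{Applying the identity.} For $\ehr_P$ and $\ehr_\ell$ I plan to use the quasi-polynomial formulas evaluated at $m=0$, rather than the convention $\ehr(0)=1$. The reason is that the $m=0$ term contributes the constant $1$ to each of $\ehr_{\Pyr(P)}$ and $\ehr_{\Pyr(\ell)}$, while the quasi-polynomials evaluated at $0$ differ from $1$ only by constants. Constants are polynomials, so this substitution changes each side only up to $\equiv$. With that adjustment, $\ehr_{\Pyr(P)}+\ehr_{\Pyr(\ell)}\equiv S(g)$ where $g\deftobe\ehr_P+\ehr_\ell$. By \eqref{eq:PentagonEquivToSegment}, $g$ agrees with some polynomial on the nonnegative integers. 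Summing a polynomial gives a polynomial (Faulhaber), so $S(g)$ is a polynomial and $\ehr_{\Pyr(P)}\equiv-\ehr_{\Pyr(\ell)}$.

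\textbf{Extending to all $x\in\Z$.} The step needing the most care is that $\equiv$ is defined for quasi-polynomials on all of $\Z$, whereas the slicing identity only holds for $k\ge1$. Both sides of the equivalence are quasi-polynomials: the summatory function of a quasi-polynomial is again a quasi-polynomial, as one sees by summing separately over each residue class modulo the period. A quasi-polynomial is determined by its values on the positive integers. Therefore the difference $\ehr_{\Pyr(P)}+\ehr_{\Pyr(\ell)}$, which equals a polynomial for all $k\ge1$, is that polynomial identically. This completes the $i=1$ step, and the induction gives the result for all $i$.
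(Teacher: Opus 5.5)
Your proof is correct and is essentially the paper's argument: the slicing identity $\ehr_{\Pyr(\QQ)}(k)=\sum_{m=0}^{k}\ehr_{\QQ}(m)$ is the coefficientwise form of the relation $\Ehr_{\Pyr(\QQ)}(t)=\Ehr_{\QQ}(t)/(1-t)$ that the paper cites (iterated $i$ times), and both arguments conclude because partial summation carries polynomials to polynomials. The only differences are that you derive the pyramid relation instead of citing it, and you spell out the $k=0$ and positive-integers-to-$\Z$ bookkeeping that the paper's ``comparing coefficients'' leaves implicit (your constant adjustment at $m=0$ is harmless, though in fact $\ehr_{\QQ}(0)=1$ for any rational polytope, since the residue-$0$ constituent is the Ehrhart polynomial of an integral dilate).
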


\begin{proof}
   The \emph{Ehrhart series} $\Ehr_{\QQ}(t)$ of a rational
   polytope $\QQ$ is the generating function of $\ehr_{\QQ}(x)$.  
   That is, $\Ehr_{\QQ}(t)$ is the formal power series
   \begin{equation*}
      \Ehr_{\QQ}(t) \deftobe \sum_{k = 0}^{\infty} \ehr_{\QQ}(k)
      t^{k}.
   \end{equation*}
   It is well known that $\Ehr_{\QQ}(t)$ is a rational function in
   $t$.  Furthermore, if $\Pyr(\QQ)$ is an $i$-fold pyramid over
   $\QQ$, then
   \begin{equation*}
      \Ehr_{\Pyr(\QQ)}(t) = \frac{1}{(1 - t)^{i}} \Ehr_{\QQ}(t).
   \end{equation*}
   Given generating functions $F(t)$ and $G(t)$ of
   quasi-polynomials $f(x)$ and $g(x)$, respectively, we write
   $F(x) \equiv G(x)$ if $f(x) \equiv g(x)$.  Hence,
   equivalence~\eqref{eq:PentagonEquivToSegment} implies that
   $\Ehr_{P}(t) \equiv - \Ehr_{\ell}(t)$, and so
   \begin{align*}
      \Ehr_{\Pyr(P)}(t) = \frac{1}{(1-t)^{i}} \Ehr_{P}(t)
       \equiv -\frac{1}{(1-t)^{i}} \Ehr_{\ell}(t)
       = - \Ehr_{\Pyr(\ell)}(t).
   \end{align*}
   Equivalence \eqref{eq:PentagonEquivToSegment-Pyramids} follows
   by comparing coefficients of the series.
\end{proof}

One example of an $i$-fold pyramid that we will have occasion to
use is the simplex $S_{n} \subset \R^{n-1}$ given by
\begin{equation*}
   S_{n} \deftobe \conv\setof{\pt{0}, -\tfrac{1}{p}\pt{e}_{1},
   \pt{e}_{2}, \dotsc, \pt{e}_{n-1}},
\end{equation*}
which is an $(n-2)$-fold pyramid over $\ell$.  It is known that
the period sequence of $S_{n}$ is $(p, 1, \dotsc, 1)$.  Indeed, up
to a lattice-preserving transformation, $S_{n}$ is among the
polytopes constructed by Beck et al.~\cite{BecSamWoo2008} to prove
Theorem~\ref{thm:BeckSamWoods}.  

We also construct an $(n-2)$-fold pyramid over the pentagon $P$ as
follows.  Write $P' \subset \R^{n}$ for the embedded copy of $P$
defined by 
\begin{equation*}
    P' \deftobe \setof{(\pt{x}, 0, \dotsc, 0) \in \R^{n}
    \st \pt{x} \in P}.
\end{equation*}
We set $P_{n}$ to be the pyramid
\begin{equation*}
   P_{n} \deftobe \conv\parens{P' \cup \setof{\pt{e}_{3}, \dotsc, 
   \pt{e}_{n}}}.
\end{equation*}
Note that, by
Proposition~\ref{prop:PentagonEquivToSegment-Pyramids},
\begin{equation}
   \label{eq:SnEquivPn}
   \ehr_{S_{n}}(x) \equiv - \ehr_{P_{n}}(x).
\end{equation}

Let $W_{n} \subset \R^{n}$ be the translated prism over the
simplex $S_{n}$ defined by
\begin{equation*}
   W_{n} \deftobe ([-q, q] \times S_{n}) - q\pt{e}_{2},
\end{equation*}
where $[-q,q] \subset \R$ is a closed segment.  (The reason for
the translation by $-q\pt{e}_{2}$ is that it will make the convex
hull below easy to analyze.)  From the construction of $W_{n}$, it
follows that
\begin{equation*}
   \ehr_{W_{n}}(x) = (2qx + 1) \ehr_{S_{n}}(x).
\end{equation*}

We can now construct a convex polytope $H_{n} \subset \R^{n}$
which, we will show, has the period sequence $(1,p,1, \dotsc, 1)$.
Let
\begin{equation*}
   H_{n} \deftobe \conv \parens{ W_{n} \cup P_{n} }.
\end{equation*}
(See Figure \ref{fig:H3} for the case where $n = 3$ and $p = 2$
case.)  That $H_{n}$ has the desired period sequence is a direct
consequence of the following lemma.

\begin{figure}
   \centering \includegraphics[scale=0.40]{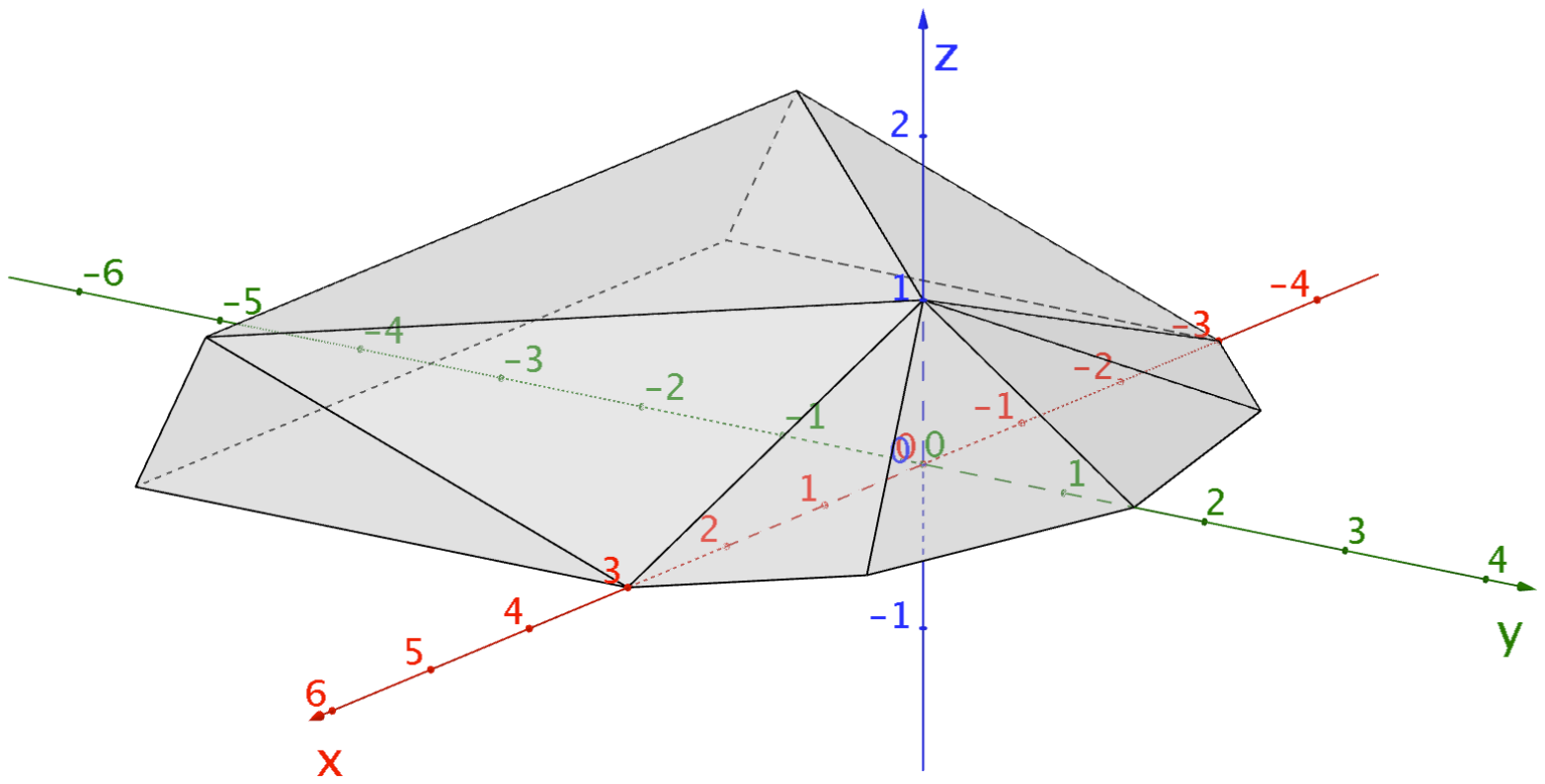}
   \caption{The polytope $H_{3}$ in the case where $p = 2$.}
   \label{fig:H3}
\end{figure}

\begin{lemma}
   The polytope $H_{n}$ defined above is a union of the form
   \begin{equation*}
      H_{n} = W_{n} \cup M_{n} \cup P_{n},
   \end{equation*}
   where $W_{n} \cap M_{n}$, $M_{n}$, and $M_{n} \cap P_{n}$ are
   lattice polytopes.
\end{lemma}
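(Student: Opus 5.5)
The plan is to exhibit $M_{n}$ explicitly as the lattice polytope that fills the gap between $W_{n}$ and $P_{n}$ along the $x_{2}$-axis, and then to check that gluing the three pieces produces a convex set. Unwinding the definitions, $W_{n}$ lies in the slab $-q-\tfrac1p \le x_{2} \le -q$, and its face in the hyperplane $x_{2}=-q$ is
\[
F \deftobe [-q,q] \times \setof{-q} \times \conv\setof{\pt{0},\pt{e}_{3},\dotsc,\pt{e}_{n}} .
\]
Here the last factor is taken in the coordinates $x_{3},\dotsc,x_{n}$. The set $F$ is a prism over a unimodular simplex with integral vertices, so it is a lattice polytope. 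Meanwhile $P_{n}$ lies in $x_{2} \ge 0$, and its face in $x_{2}=0$ is
\[
G \deftobe \conv\setof{\pm q\pt{e}_{1},\pt{e}_{3},\dotsc,\pt{e}_{n}},
\]
since $\pt{u}^{\pm}$ are the only points of $P$ with $x_{2}=0$. This $G$ is again a lattice polytope. I will set $M_{n} \deftobe \conv(F \cup G)$. It is a lattice polytope because all of its vertices are among the integral vertices of $F$ and $G$, and $M_{n}$ lies in the slab $-q \le x_{2} \le 0$.

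Next I will identify the pairwise intersections. Since $W_{n}$, $M_{n}$, $P_{n}$ occupy the slabs $x_{2}\le -q$, $-q\le x_{2}\le 0$ and $x_{2}\ge 0$, I get $W_{n}\cap M_{n} = F$ and $M_{n}\cap P_{n} = G$ (using $q\ge 1$; the case $p=1$, $q=1$ is harmless). Also $W_{n}\cap P_{n}=\emptyset$. This gives the lattice conditions in the statement, and $F$ and $G$ are common facets, as required for a union of this kind. It also gives $W_{n}\cup M_{n}\cup P_{n}\subseteq H_{n}$, since $M_{n}=\conv(F\cup G)\subseteq\conv(W_{n}\cup P_{n})$. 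For the reverse inclusion it suffices to show that $U \deftobe W_{n}\cup M_{n}\cup P_{n}$ is convex, because $U$ contains $W_{n}\cup P_{n}$.

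To prove that $U$ is convex, I will use the Tietze--Nakajima theorem: a closed, connected, locally convex subset of $\R^{n}$ is convex. Local convexity is automatic away from $F$ and $G$, so it has to be checked only near the gluing facets. Concretely, for each facet of $M_{n}$ meeting the relative boundary of $F$, I must show that the corresponding inequality is valid on $W_{n}$, and likewise across $G$ for $P_{n}$. The same must hold in the other direction: facets of $W_{n}$ or $P_{n}$ adjacent to $F$ or $G$ must be valid on $M_{n}$.

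Equivalently, and more computationally, I would write down an inequality description of $H_{n}$ and check it on a few slices. The facets of $M_{n}$ come from the ridges of $F$ and $G$. Those ridges are $x_{1}=\pm q$, $x_{j}=0$ for $j\ge 3$, and $\sum_{j\ge 3} x_{j} = 1$ on $F$, and the two lines through $\pt{u}^{\pm}$ on $G$. The facets $x_{1}=\pm q$, $x_{j}\ge 0$ and $\sum_{j \ge 3} x_{j}\le 1$ are shared verbatim with $W_{n}$, so no problem arises at $F$.

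The main obstacle is the gluing at $G$. There the facet of $M_{n}$ through $\pt{u}^{+}$ and $F$ has to be compared with the facet of $P_{n}$ through the edge $\pt{u}^{+}\pt{v}^{+}$. In the $x_{1}x_{2}$-plane this is the comparison between the line from $(q,-q)$ to $(q,0)$, which is vertical, and the line from $(q,0)$ to $(q-1,1)$. The latter bends inward, so convexity holds at $\pt{u}^{+}$. The symmetric argument applies at $\pt{u}^{-}$. For the slant facets involving the apexes $\pt{e}_{3},\dotsc,\pt{e}_{n}$, I will reduce to this planar check. The reason is that every piece is a pyramid with the same apexes $\pt{e}_{3},\dotsc,\pt{e}_{n}$, after translating the apex set of $W_{n}$ and $F$ by $-q\pt{e}_{2}$. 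So the facets are cones over the planar edges, and the inequality $\sum_{j\ge3}x_{j}\le 1$, together with the appropriate lifts of the planar edge inequalities, describes everything.
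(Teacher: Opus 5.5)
Your setup matches the paper's. Your $M_n=\conv(F\cup G)$ is the paper's $M_n$, and the identifications $W_n\cap M_n=F$ and $M_n\cap P_n=G$ are correct. Tietze--Nakajima does reduce the reverse inclusion to a local check: every facet inequality of one piece that meets the relative boundary of a gluing facet must be valid on the neighbouring piece. That is a legitimate local-to-global alternative to the paper's argument.

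\textbf{The gap.} The checks you actually carry out rest on false descriptions of the pieces, and those checks are the whole content of the lemma.

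First, $\sum_{j\ge3}x_j\le 1$ is not a facet of $W_n$. The facet of $S_n$ opposite $\pt{0}$ gives $W_n$ the facet $-p(x_2+q)+\sum_{j\ge3}x_j\le 1$, so nothing is ``shared verbatim'' there. The check still passes, but only because $x_2\le -q$ on $W_n$ and $x_2\ge -q$ on $M_n$.

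Second, $W_n$, $F$ and $M_n$ are not pyramids with apexes $\pt{e}_j-q\pt{e}_2$. For instance, $W_n=[-q,q]\times(S_n-q\pt{e}_2)$ is a prism in the $x_1$-direction containing the whole segments $[-q,q]\pt{e}_1-q\pt{e}_2+\pt{e}_j$. So the facets of $W_n$ and $M_n$ are not cones over edges of the planar slice $x_3=\dots=x_n=0$ with common apexes, and your reduction to the planar picture is unjustified.

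\textbf{What the slice hides.} The slice hides exactly the facets that matter. From $M_n\cap\{x_2=-tq\}=tF+(1-t)G$ one finds
\[
   M_n=\Bigl\{\pt{x}\in\R^{n} \st -q\le x_2\le 0,\ x_j\ge 0\ (j\ge 3),\ \textstyle\sum_{j\ge3}x_j\le 1,\ |x_1|\le q,\ \pm x_1+x_2+q\sum_{j\ge3}x_j\le q\Bigr\}.
\]
\begin{itemize}
\item Along the ridge $\conv\{q\pt{e}_1,\pt{e}_3,\dots,\pt{e}_n\}$ of $G$, the facet of $M_n$ is $x_1+x_2+q\sum_{j\ge3}x_j\le q$. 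It coincides with the facet of $P_n$ over the edge $\pt{u}^+\pt{v}^+$. The facet $x_1\le q$ that you compare in the plane meets $G$ only at $\pt{u}^+$.
\item Your check at $F$ omits the facets $\pm x_1+x_2+q\sum_{j\ge3}x_j\le q$. These meet the relative boundary of $F$ in $\pm q\pt{e}_1-q\pt{e}_2+\conv\{\pt{e}_3,\dots,\pt{e}_n\}$.
\item Your check at $G$ omits the inequalities $\pm(p-1)x_1+p^2x_2+pq\sum_{j\ge3}x_j\le pq$ of $P_n$. These meet $G$ in $\conv\{\pt{e}_3,\dots,\pt{e}_n\}$.
\end{itemize}
All of these inequalities do turn out to be valid on the neighbouring pieces. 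So your route can be repaired by redoing the checks with the correct facets.

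\textbf{A simpler fix.} You can instead follow the paper, which never needs the facets of $M_n$. Check $|x_1|\le q$, $x_j\ge0$ and $-p(x_2+q)+\sum_{j\ge3}x_j\le1$ at the vertices of $P_n$. Check $x_j\ge0$, $\pm x_1+x_2+q\sum_{j\ge3}x_j\le q$ and $\pm(p-1)x_1+p^2x_2+pq\sum_{j\ge3}x_j\le pq$ at the vertices of $W_n$. Then every segment from $W_n$ to $P_n$ lies in all these halfspaces. Its part with $x_2\le -q$ therefore lies in $W_n$, and its part with $x_2\ge 0$ lies in $P_n$. Its middle part joins a point of $F$ to a point of $G$, hence lies in $M_n$.
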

\begin{proof}
   Note that $W_{n}$, respectively $P_{n}$, has a facet
   perpendicular to the $\pt{e}_{2}$-axis.  Write $F_{W}$,
   respectively $F_{P}$, for this facet.  That is,
   \begin{align*}
      F_{W} & = \conv\setof{\pm q \pt{e}_{1}, \pm q \pt{e}_{1} +
      \pt{e}_{3}, \dotsc, \pm q \pt{e}_{1}+\pt{e}_{n}} - q 
      \pt{e}_{2}, \\
      F_{P} & = \conv\setof{\pm q \pt{e}_{1}, \pt{e}_{3}, \dotsc,
      \pt{e}_{n}}.
   \end{align*}
   Let $M_{n} \deftobe \conv (F_{W} \cup F_{P})$.  To prove that
   $H_{n} = W_{n} \cup M_{n} \cup P_{n}$, it suffices to prove the
   following two statements:
   \begin{enumerate}
      \item  
      For each facet $F \ne F_{W}$ of $W_{n}$, $P_{n}$ lies on the
      same side of the hyperplane supporting $F$ as $W_{n}$ does.
   
      \item  
      For each facet $F \ne F_{P}$ of $P_{n}$, $W_{n}$ lies on the
      same side of the hyperplane supporting $F$ as $P_{n}$ does.
   \end{enumerate}
   In other words, excepting $F_{W}$ and $F_{P}$, no facet of
   $W_{n}$ is visible from a vertex of $P_{n}$ and vice versa
   \cite[Section 22.3.1]{GooORo2004}.  
   
   To prove statement~(1), recall that $W_{n}$ is a prism over a
   simplex.  From this, the required facet-defining inequalities
   are easily determined and shown to be satisfied by the vertices
   of $P_{n}$.  To prove statement~(2), note that $P_{n}$ is a
   pyramid over $P_{n-1}$.  Hence, every facet of $P_{n}$ is
   either the ``base'' copy of $P_{n-1}$ or a pyramid over a facet
   of $P_{n-1}$ with apex $\pt{e}_{n}$.  Thus, the required
   facet-defining inequalities are again easily determined by
   induction and shown to be satisfied by the vertices of $W_{n}$.
\end{proof}

It is now straightforward to complete the proof of 
Theorem~\ref{thm:1p11}.  In particular, we prove the following:

\begin{thm}
   Let a positive integer $p$ be given.  Then the convex rational
   polytope $H_{n} \subset \R^{n}$ constructed above has period
   sequence $(1, p, 1, \dotsc, 1)$.
\end{thm}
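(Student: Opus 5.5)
We decompose $\ehr_{H_{n}}$ using the preceding lemma and reduce everything to the quasi-polynomial of $S_{n}$. By the lemma, $H_{n} = W_{n} \cup M_{n} \cup P_{n}$, where $M_{n}$ is a lattice polytope and the pieces meet in lattice polytopes ($W_{n}\cap M_{n} = F_{W}$, $M_{n}\cap P_{n}=F_{P}$, and $W_{n}\cap P_{n}\subseteq F_{W}\cap F_{P}$ is integral or empty). By inclusion--exclusion, all corrections are Ehrhart polynomials of lattice polytopes, and $\ehr_{M_{n}}$ is itself a polynomial. Hence
\begin{equation*}
   \ehr_{H_{n}}(x) \equiv \ehr_{W_{n}}(x) + \ehr_{P_{n}}(x).
\end{equation*}
Using $\ehr_{W_{n}}(x) = (2qx+1)\ehr_{S_{n}}(x)$ and equivalence~\eqref{eq:SnEquivPn}, this becomes
\begin{equation*}
   \ehr_{H_{n}}(x) \equiv (2qx+1)\ehr_{S_{n}}(x) - \ehr_{S_{n}}(x) = 2qx\,\ehr_{S_{n}}(x).
\end{equation*}

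The main step is to determine the periodic structure of $\ehr_{S_{n}}$ exactly. We know its period sequence is $(p,1,\dotsc,1)$, so $\ehr_{S_{n}}(x) = c_{0}(x) + \sum_{i\ge1} c_{i}x^{i}$ with the $c_{i}$ constant for $i \ge 1$ and $c_{0}$ of minimum period $p$. Then
\begin{equation*}
   2qx\,\ehr_{S_{n}}(x) = 2q\,c_{0}(x)\,x + \text{(polynomial)}.
\end{equation*}
So, modulo polynomials, $\ehr_{H_{n}}(x) \equiv 2q\,c_{0}(x)\,x$.

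From this the coefficient functions of $H_{n}$ can be read off. The constant term is $c_{H_{n},0}$, which is constant, so it has period $1$. The linear coefficient is $c_{H_{n},1}$, which equals $2q\,c_{0}$ plus a constant and so has minimum period exactly $p$, since $2q\neq 0$. All higher coefficients are constants. One subtlety is that the equivalence $\equiv$ only holds up to adding a polynomial, but adding a genuine polynomial changes no period, so the period sequence is exactly $(1,p,1,\dotsc,1)$.

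The one point I would double-check is that the decomposition of $\ehr$ over the union is valid as stated. This requires checking that $W_{n}\cap P_{n}$ is contained in the lattice pieces, so that all the overlap terms are Ehrhart polynomials of lattice polytopes. It also requires that the period sequence $(p,1,\dotsc,1)$ of $S_{n}$ is meant as actual constancy of the higher coefficients. I would cite Beck et al.\ for that fact, or verify it directly: summing over slices in the $\pt{e}_{2},\dotsc,\pt{e}_{n-1}$ directions shows that $\ehr_{S_{n}}$ differs from a polynomial only by $c_{\ell,0}(x)$.
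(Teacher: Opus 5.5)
Your proposal is correct and is essentially the paper's own argument: discard the lattice pieces to get $\ehr_{H_{n}} \equiv \ehr_{W_{n}} + \ehr_{P_{n}}$, then use $\ehr_{W_{n}} = (2qx+1)\ehr_{S_{n}}$ and \eqref{eq:SnEquivPn} to reach $2qx\,\ehr_{S_{n}}(x)$, and read off the periods from the known period sequence $(p,1,\dotsc,1)$ of $S_{n}$. Your worry about $W_{n}\cap P_{n}$ is moot: the $\pt{e}_{2}$-coordinate is at most $-q$ on $W_{n}$ and at least $0$ on $P_{n}$, so the two pieces are disjoint.

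One small correction to your optional side remark. The periodic part of $\ehr_{S_{n}}$ does sit only in the constant term with period $p$, but for $n \ge 3$ it is not $c_{\ell,0}$ itself. For example, with $n=3$ and $p=2$, the constant coefficient of $\ehr_{S_{3}}$ is $1$ or $\tfrac{3}{4}$ according to parity, whereas $c_{\ell,0}$ is $1$ or $\tfrac{1}{2}$. This does not affect your proof, since you rely on the cited period sequence of $S_{n}$.
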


\begin{proof}
   Apply equivalence \eqref{eq:SnEquivPn} to compute that
   $\ehr_{H_{n}}(x)$ satisfies
   \begin{equation*}
      \ehr_{H_{n}}(x)
       \equiv \ehr_{W_{n}}(x) + \ehr_{P_{n}}(x) 
       \equiv (2qx + 1) \ehr_{S_{n}}(x) - \ehr_{S_{n}}(x)
       = 2q x \ehr_{S_{n}}(x).
   \end{equation*}
   As noted at the beginning of this section, $S_{n}$ has period
   sequence $(p, 1, \dotsc, 1)$.  Therefore, $H_{n}$ has period
   sequence $(1, p, 1, \dotsc, 1)$, as desired.
\end{proof}

\section{Nonconvex polytopes with period sequence \texorpdfstring{$(1, \dotsc, 1, p, 1)$}{(1, ..., 1, p, 1)}}
\label{sec:TheBarn}

We again fix a positive integer $p \ge 1$.  The main result of
this section is the proof of Theorem~\ref{thm:TheBarn}.  In
particular, we construct $n$-dimensional \emph{non}convex rational
polytopes with period sequence $(1, \dotsc, 1,p,1)$, when \mbox{$3
\le n \le 11$} or $n = 13$.

The reason for the constraint on the dimension $n$ in
Theorem~\ref{thm:TheBarn} is that our construction depends upon
the existence of a solution to a particular system of Diophantine
equations in $n-1$ variables, namely, the so-called \emph{ideal
Prouhet--Tarry--Escott} (PTE) problem.  More precisely, we require
integers \mbox{$s_{1}, \dotsc, s_{n-1} > 0$} and $t_{1}, \dotsc,
t_{n-2} > t_{n-1} = 0$ such that
\begin{align}
   \label{eq:PTEconditions}
   \begin{split}
      p_{k}(s_{1}, \dotsc, s_{n-1})
       & = p_{k}(t_{1}, \dotsc, t_{n-1}) \qquad \text{for $0 \le k \le n-2$}, 
   \end{split}
\end{align}
where $p_{k}(\pt{x})$ is the power-sum symmetric function of
degree $k$ in $n-1$ variables.

Such solutions to system \eqref{eq:PTEconditions} are known to
exist when the number of variables is between $2$ and $10$
(inclusive) or is $12$ \cite[Chapter 11]{Bor2002}\footnote{An
integer solution to system \eqref{eq:PTEconditions} remains a
solution after a constant integer is added to all of the values
$s_{i}$, $t_{j}$.  Therefore, it suffices to find an integer
solution to \eqref{eq:PTEconditions} in which the minimum value
among the $s_{i}$, $t_{j}$ appears only once.  This condition is
satisfied by the solutions listed on \cite[p.~87]{Bor2002} to the
PTE problem.}.  No solution in $11$ variables is known.  Wright
\cite{Wri1934} conjectures that solutions exist for every number
of \mbox{variables $\ge 2$}.  However, Borwein
\cite[p.~87]{Bor2002} gives a heuristic argument suggesting that
this would be surprising.

Theorem \ref{thm:TheBarn} above is a corollary of the following
theorem, which constructs a nonconvex polytope with period
sequence $(1, \dotsc, 1, p, 1)$ in every dimension $n$ such that a
suitable solution to system \eqref{eq:PTEconditions} exists.

\begin{thm}
   Let a positive integer $p$ be given.  Let $n \ge 3$ be such
   that there are integers \mbox{$s_{1}, \dotsc, s_{n-1} > 0$} and
   $t_{1}, \dotsc, t_{n-2} > t_{n-1} = 0$ solving system
   \eqref{eq:PTEconditions} above.  Then there exists an
   $n$-dimensional polytope $B_{n} \subset \R^{n}$ with period
   sequence $(1, \dotsc, 1, p, 1)$.
\end{thm}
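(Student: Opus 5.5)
The plan is to obtain $\ehr_{B_{n}}(x) \equiv C\, c_{\ell,0}(x)\, x^{n-1}$ for some nonzero constant $C$. Then the only periodic coefficient is the one of $x^{n-1}$, and it has period exactly $p$. The PTE identities \eqref{eq:PTEconditions} are what kill all the lower-degree periodic terms.

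\textbf{Algebraic core.} Let $g(x) = \binom{x+n-1}{n-1}$ be the Ehrhart polynomial of the standard $(n-1)$-simplex $\Delta$, so that $s\Delta$ has Ehrhart polynomial $g(sx) = \sum_{k=0}^{n-1} a_{k} s^{k} x^{k}$ with $a_{n-1} = 1/(n-1)!$. Then
\begin{equation*}
   \sum_{i=1}^{n-1} \bigl( g(s_{i}x) - g(t_{i}x) \bigr)
   = \sum_{k=0}^{n-1} a_{k} x^{k}\bigl(p_{k}(\pt{s}) - p_{k}(\pt{t})\bigr)
   = a_{n-1} D\, x^{n-1},
\end{equation*}
where $D = p_{n-1}(\pt{s}) - p_{n-1}(\pt{t})$. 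By \eqref{eq:PTEconditions} every term with $k \le n-2$ vanishes. Moreover $D \ne 0$, since for two distinct multisets of $n-1$ numbers the power sums $p_{0}, \dotsc, p_{n-1}$ cannot all agree (Newton's identities). The two multisets are distinct because $t_{n-1} = 0$ while all $s_{i} > 0$. Consequently, if we can assemble a polytope whose quasi-polynomial is equivalent to $\ehr_{\ell}(x) \cdot \bigl(\sum_{i} g(s_{i}x) - \sum_{i} g(t_{i}x)\bigr)$, then, since $\ehr_{\ell}(x) = \tfrac{1}{p}x + c_{\ell,0}(x)$, we get $\ehr \equiv a_{n-1} D\, c_{\ell,0}(x)\, x^{n-1}$. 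This is exactly period sequence $(1,\dotsc,1,p,1)$.

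\textbf{Geometric realization.} The positive terms are easy: the prisms $\ell \times s_{i}\Delta$ have Ehrhart quasi-polynomial exactly $\ehr_{\ell}(x)\, g(s_{i}x)$. The negative terms are where the complement relation \eqref{eq:PentagonEquivToSegment} enters. For each $t_{i}$ I would build a piece whose quasi-polynomial is equivalent to $-\ehr_{\ell}(x)\, g(t_{i}x)$. The first candidate is $P \times t_{i}\Delta'$ for a suitable lattice simplex $\Delta'$, using that products multiply Ehrhart functions, possibly corrected by pyramids as in Proposition~\ref{prop:PentagonEquivToSegment-Pyramids}. The term with $t_{n-1} = 0$ degenerates to a lower-dimensional piece, or is simply absent since $g(0)=1$ becomes a constant multiple of $\ehr_{\ell}$. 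Dimensions must be matched carefully, because $P$ is $2$-dimensional and $\ell$ is $1$-dimensional. Any discrepancy, such as the extra factor $(2qx+1)$ that appears when $\ell$ is thickened to $[-q,q]\times\ell$ as in the heptagon $H$, has to be absorbed by adjusting which simplices sit over $\ell$ and which over $P$. Here the PTE identities let lower-order terms cancel.

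\textbf{Gluing, and the main obstacle.} All the pieces are then glued into a single topological ball $B_{n}$, a nonconvex ``barn'', together with lattice connector polytopes analogous to $M_{n}$. Every pairwise intersection must be a common facet that is a \emph{lattice} polytope, so that, exactly as with the remark preceding \eqref{eq:PentagonEquivToSegment}, $\ehr_{B_{n}}$ is equivalent to the sum of the pieces' quasi-polynomials. I expect this step to be the main obstacle. Placing the pieces $\ell\times s_{i}\Delta$ and their $P$-counterparts side by side, for instance stacked along a lattice direction, so that they meet only along integral facets and form a ball, will require explicit coordinates and a verification analogous to the facet-visibility argument in the lemma for $H_{n}$. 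Convexity will be lost, which is why the theorem only claims a polytope in the generalized (nonconvex) sense.
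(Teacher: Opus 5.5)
Your algebraic identity is correct, but the objects it describes cannot be built. The real obstacle is a dimension mismatch, not the gluing.

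Since $P$ is $2$-dimensional and $B_{n}\subset\R^{n}$, every piece built as a product with $P$ has the form $P\times Q$ with $Q$ an $(n-2)$-dimensional lattice polytope. Such a piece contributes $\ehr_{P}(x)\ehr_{Q}(x)\equiv-\ehr_{\ell}(x)\ehr_{Q}(x)$ with $\deg\ehr_{Q}=n-2$. Your negative terms $-\ehr_{\ell}(x)\,g(t_{i}x)$ have $\deg g=n-1$, so they cannot arise this way. Proposition~\ref{prop:PentagonEquivToSegment-Pyramids} does not help either, since it again trades a $P$-object for an $\ell$-object of one dimension less.

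Try the natural substitute $Q=t_{i}\Delta'$, with $\Delta'$ the standard $(n-2)$-simplex. The negative side becomes $\sum_{i}\binom{t_{i}x+n-2}{n-2}$, whose coefficients differ from those of $g$. The coefficient of $y$ in $\binom{y+m}{m}$ is $1+\frac{1}{2}+\dots+\frac{1}{m}$. So, after using $p_{1}(\pt{s})=p_{1}(\pt{t})$, the coefficient of $x$ in the difference is $\frac{1}{n-1}p_{1}(\pt{s})\neq 0$. The periodic term $\frac{p_{1}(\pt{s})}{n-1}\,c_{\ell,0}(x)\,x$ therefore survives, which breaks the claimed period sequence. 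The PTE equalities say nothing about such a reweighted system. Note also that you use $t_{n-1}=0$ only to get $D\neq 0$, so it plays no real role in your scheme; that is a sign something is missing.

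The missing idea is to make the lattice factor multiplicative instead of additive. The paper takes
$B_{n}=\bigl(\prod_{i=1}^{n-1}[0,s_{i}]\bigr)\times\ell\,\cup\,\bigl(\prod_{j=1}^{n-2}[0,t_{j}]\bigr)\times P$.
These two products meet in a lattice box in the hyperplane $x_{n}=0$, so
$\ehr_{B_{n}}(x)\equiv\bigl(\prod_{i=1}^{n-1}(s_{i}x+1)-\prod_{j=1}^{n-2}(t_{j}x+1)\bigr)\ehr_{\ell}(x)$.
A box has Ehrhart polynomial $\sum_{k}e_{k}x^{k}$, and Newton's identities turn the PTE conditions into $e_{k}(\pt{s})=e_{k}(\pt{t})$ for $k\le n-2$. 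The hypothesis $t_{n-1}=0$ is exactly what lets the $t$-box be $(n-2)$-dimensional while still having Ehrhart polynomial $\prod_{j=1}^{n-1}(t_{j}x+1)$. This absorbs the extra dimension of $P$ over $\ell$. The difference of the two products is then $s_{1}\dotsm s_{n-1}x^{n-1}$, and no connector polytopes or visibility arguments are needed.
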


\begin{proof}
   Let $\ell = [-\tfrac{1}{p}, 0] \subset \R$, and let $P \subset \R^{2}$
   be the pentagon defined in Section~\ref{sec:BuildingBlocks}.
   Let $B_{n} \subset \R^{n}$ be the rational polytope defined as
   follows:
   \begin{equation*}
      B_{n} \quad \deftobe \quad
      \parens{\bigtimes_{i = 1}^{n-1}
      [0, s_{i}]} \times \ell
      \quad \cup \quad
      \bigg\lparen
         \bigtimes_{j = 1}^{n-2} [0, t_{j}]
      \bigg\rparen \times P.
   \end{equation*}
   Hence, $B_{n}$ is a union of two rational polytopes whose
   intersection is an integer polytope.  Thus,
   \begin{equation*}
      \ehr_{B_{n}}(x) \equiv 
      \parens{\prod_{i=1}^{n-1}(s_{i} x + 1)}  \ehr_{\ell}(x) 
      +
      \Bigg\lparen \prod_{j=1}^{n-2}(t_{j} x + 1) \Bigg\rparen \ehr_{P}(x).
   \end{equation*}
   Since $\ehr_{P}(x) \equiv -\ehr_{\ell}(x)$, it follows that
   \begin{equation*}
      \ehr_{B_{n}}(x) \equiv
      \Bigg\lparen \prod_{i=1}^{n-1}(s_{i} x + 1) -
      \prod_{j=1}^{n-2}(t_{j} x + 1) \Bigg\rparen \ehr_{\ell}(x).
   \end{equation*}
   We now exploit the fact that the $s_{i}$ and $t_{j}$ solve
   system \eqref{eq:PTEconditions}.  Newton's identities relating
   the power-sum symmetric functions to the elementary symmetric
   functions imply that the $s_{i}$ and $t_{j}$ also solve the
   system
   \begin{align*}
      \begin{split}
         e_{k}(s_{1}, \dotsc, s_{n-1})
          & = e_{k}(t_{1}, \dotsc, t_{n-1}) \qquad \text{for $0 \le k \le n-2$}, 
      \end{split}
   \end{align*}
   where $e_{k}(\pt{x})$ is the elementary symmetric function of
   degree $k$ in $n-1$ variables.  Hence,
   \begin{equation*}
      \prod_{i=1}^{n-1}(s_{i} x + 1) - \prod_{j=1}^{n-2}(t_{j} x +
      1) = s_{1}\dotsm s_{n-1} x^{n-1}.
   \end{equation*}
   Therefore, $\ehr_{B_{n}}(x) \equiv s_{1}\dotsm s_{n-1} x^{n-1}
   \ehr_{\ell}(x) \equiv s_{1}\dotsm s_{n-1} c_{\ell,0}(x)
   x^{n-1}$.  That is, all coefficient functions of
   $\ehr_{B_{n}}(x)$ are constant except for the coefficient of
   $x^{n-1}$, which has period $p$, as desired.
\end{proof}


\providecommand{\bysame}{\leavevmode\hbox to3em{\hrulefill}\thinspace}
\providecommand{\MR}{\relax\ifhmode\unskip\space\fi MR }
\providecommand{\MRhref}[2]{%
  \href{http://www.ams.org/mathscinet-getitem?mr=#1}{#2}
}
\providecommand{\href}[2]{#2}

\end{document}